\documentclass[11pt]{elsarticle}
\usepackage{amssymb}
\usepackage{mathrsfs}
\usepackage{hyperref}
\usepackage{lmodern}

\setlength{\textheight}{8.9in} \setlength{\headheight}{0.15in}
\setlength{\oddsidemargin}{.99in} \setlength{\evensidemargin}{.99in}
\setlength{\textwidth}{6.5in}
\topmargin -32pt
\hoffset=-.89truein 
\vfuzz2pt 
\hfuzz2pt 
\newtheorem{thm}{Theorem}[section]
\newtheorem{cor}[thm]{Corollary}
\newtheorem{lem}[thm]{Lemma}

\newtheorem{prop}[thm]{Proposition}
\newtheorem{example}[thm]{Example}


\newcommand{\x}{\mathbf{x}}
\newcommand{\ga}{\mathbf{a}}
\newcommand{\gb}{\mathbf{b}}

\newcommand{\G}{\mathscr{G}}
\newcommand{\D}{\mathscr{D}}

\newcommand{\im}{{\rm Im\,}}
\newcommand{\si}{\sigma}

\newcommand{\de}{\delta}

\newproof{pf}{Proof}

\journal{Journal of Pure and Applied Algebra}

\begin{document}

\begin{frontmatter}

\title{{\bf Locally nilpotent skew extensions of rings}}%

\author{Piotr Grzeszczuk}%
\fnref{fn1}

\ead{p.grzeszczuk@pb.edu.pl}

\fntext[fn1]{The research of was supported by
the Bialystok University of Technology grant WZ/WI/1/2019 and funded by the
resources for research by Ministry of Science and Higher Education of Poland.}

\address{Faculty of Computer Science, Bialystok University of
Technology, Wiejska 45A, 15-351 Bia{\l}ystok, Poland}

\begin{abstract}
We extend existing results on locally nilpotent differential polynomial rings 
to skew extensions of rings. We prove that if $\mathscr{G}=\{\sigma_t\}_{t\in T}$ is a locally finite family of automorphisms of an algebra $R$, $\mathscr{D}=\{\delta_t\}_{t\in T}$ is a family of skew derivations of $R$ such that the prime radical $P$ of $R$ is strongly invariant under $\D$, then the ideal  $P\langle T,\mathscr{G},\mathscr{D}\rangle^*$ of $R\langle T,\mathscr{G},\mathscr{D}\rangle$, generated by $P$, is locally nilpotent.  We then apply this result to algebras with locally nilpotent derivations. We prove that any algebra $R$ over a field of characteristic $0$, having a surjective locally nilpotent derivation $d$ with commutative kernel, and such that $R$ is generated by $\ker d^2$, has  a locally nilpotent Jacobson radical. 
\end{abstract}

\begin{keyword}
locally nilpotent ring \sep skew derivation \sep skew extension \sep Jacobson radical
\MSC[2010] 16N20  \sep 16N40 \sep 16W25   
\end{keyword}
\end{frontmatter}

\section*{Introduction}

Recently, there has been a significant interest in some problems concerning the local nilpotency and other radical properties of differential polynomial rings (\cite{BMS,C, GSZ, S, SZ}). We recall that a ring $R$ is locally nilpotent if every finitely generated subring of $R$ is a nilpotent ring. A. Smoktunowicz and M. Ziembowski in \cite{SZ} negatively answered a question of Shestakov, by constructing an example of a locally nilpotent
ring $R$ such that $R[x;\delta]$ is not equal to its own Jacobson radical. On the other hand, M. Chebotar in \cite{C} proved that the differential polynomial ring $R[x;\delta]$ over locally nilpotent ring $R$ cannot be mapped onto a ring with a non-zero idempotent; in the language of the radical theory it means that if $R$ is locally nilpotent, 
then $R[x;\delta]$ is always Behrens radical. In \cite{BMS} J. Bell, B. Madill and F. Shinko  proved that if $R$ is a locally nilpotent ring satisfying a polynomial identity, then every skew polynomial
ring $R[x;\delta]$ is locally nilpotent.  Next,  B. Greenfeld, A. Smoktunowicz and M. Ziembowski extended this result  in \cite{GSZ}, where they proved that if $R$ is a Baer radical ring, then every differential polynomial ring $R[x;\delta]$ is locally nilpotent. Our main goal is to extend  some of the quoted above results on locally nilpotent differential polynomial rings to skew extensions of rings. We now introduce the notion of a skew extension of a ring.

For a given ring $R$ with an automorphism 
$\sigma$, a {\it $\sigma$-derivation} of $R$ is an additive map $\delta$ from $R$ to itself satisfying the Leibniz rule: $\delta(xy)=\delta(x)y+\sigma(x)\delta(y) $, for $x,y\in R$.  An extension $R\subseteq S$ of rings is said to be a {\it skew extension} if $S$ contains a subset $T$ such that 
\begin{enumerate}
\item[(a)] $S$ is generated as a ring by $R\cup T$;
\item[(b)] for any $t\in T$ there exists a ring automorphism $\sigma_t$ of $R$ and a $\sigma_t$-derivation $\delta_t\colon R\to R$, such that $tr=\sigma_t(r)t+\delta_t(r)$ for all $r\in R$.
\end{enumerate} 
The ring $S$ satisfying conditions (a) and (b)  will be denoted by $R\langle T;\G,\D\rangle$, where $\G=\{\sigma_t\}_{t\in T}$ and $\D=\{\delta_t\}_{t\in T}$ are sequences
of automorphisms and skew derivations corresponding to generators $t\in T$. 
Since we do not assume that the mappings $t\mapsto \sigma_t$ and $t\mapsto\delta_t$ are injective, $\G$ and $\D$ are treated not as sets, but as sequences.
From the relations defining $R\langle T;\G,\D\rangle$ it follows immediately that every its element is a finite sum of monomials
of the form $r\omega$, where $r\in R^1$, $\omega$ is either a word in a finite set of elements from $T$ or $\omega=1$. 
Here $R^1$ denotes the usual extension  of $R$ to a ring with identity. If $R$ has no identity, then by $S^*$ we denote the set of finite sums of the form $r\omega$. Clearly, $S^*$ is an ideal of $S$.
The natural examples of skew extensions are:

\medskip

\noindent 1. The ring of skew polynomials $R[x;\sigma,\delta]$, and more generally the skew polynomial ring $R[X;\G,\D]$ over arbitrary set of non-commutative variables. More precisely, let $X$ be a nonempty set and suppose that to any $x\in X$ corresponds an automorphism $\si_x\in {\mathbf{Aut}}(R)$ and a $\si_x$-derivation 
$\de_x\colon R\to R$. Put $\G=\{\si_x\}_{ x\in X}$ and $\D=\{\de_x\}_{x\in X}$.  Then $R[X;\G,\D]$ is the ring
of all noncommutative polynomials 
$$
f(X)=\sum r_\Delta \Delta, 
$$
where $r_\Delta \in R$, and $\Delta\in \langle X\rangle$ - the free monoid generated by the set of free generators $X$.  The addition in $R[X; \G, \D]$ is defined as the addition of ordinary polynomials, and multiplication is given  subject to the rule:
$$
xr=\si_x(r)x+\de_x(r), \  \  \  {\rm where}\  \  \   x\in X,\  r\in R.
$$
We call the ring $R[X; \G, \D]$ the {\em free skew extension} of $R$. A special case when $\G=\{{\rm id}_R\}_{x\in X}$, that is when $\D$ is a sequence of ordinary derivations of $R$, was considered in \cite{TWC} under the name the Ore extension of derivation type. In \cite{BG3} the problem of semiprimitivity of free skew extensions is examined. In particular, it is proved there that if $R$ is a semiprime right Goldie ring, then the free skew extension $R[X; \G, \D]$ is always semiprimitive.
\medskip

\noindent 2. The crossed product $R\ast U(L)$, where  $U(L)$ is the universal enveloping algebra  of a Lie algebra 
$L$; see J.C. McConnel and J.C. Robson book \cite[section 1.7.12]{MR} for details.

\medskip

\noindent 3. The smash product $R\#H$, where $H$ is a Hopf algebra, which is  generated by grouplike and skew primitive elements. Note that the most typical nontrivial examples of
such Hopf algebras, which are neither group algebras nor universal enveloping
algebras, are given by quantized enveloping algebras $U_q(\mathfrak{g})$ for semisimple Lie algebras $\mathfrak{g}$ (see J. C. Brown and K. R. Goodearl book  \cite[Chapter I.6]{BrG}).

Recall that if $H$ is a Hopf algebra over a field $K$ with comultiplication $\Delta$, then $G(H)=\{\sigma\in H\mid \Delta(\sigma)=\sigma\otimes \sigma \}$ is the set of grouplike elements of $H$, and $P(H)=\bigcup\limits_{\sigma,\tau\in G(H)}\{x\in H\mid \Delta(x)=x\otimes \sigma+\tau\otimes x\}$ is the set of skew primitive elements of $H$. Observe that if $x$ is $(\sigma,\tau)$-primitive, then $\Delta(x\sigma^{-1})=x\sigma^{-1}\otimes 1+\tau\sigma^{-1}\otimes x\sigma^{-1}$. Thus if $R$ is an $H$-module algebra, then $x\sigma^{-1}$ acts on $R$ as a $\tau\sigma^{-1}$-derivation. Therefore, if $H$ is generated by $G(H)\cup P(H)$, then $H$ is also generated by the set $T$ of all grouplike and $(1,\sigma)$-primitive elements, where $\sigma\in G(H)$. In this case, every $h\in H$ is a linear combination
of products of elements of $T$. Thus
the smash product $R\#H$ is generated by $R\cup T$ and has a natural structure of a skew extension of $R$. 

For general facts about Hopf algebras and their actions, we refer the reader to S. Montgomery book \cite{M}.
\medskip  

Our main result is
\medskip

\noindent {\bf Theorem A.} {\it   Let $R\langle T,\G,\D\rangle$ be a skew extension  of an algebra $R$ over a commutative ring $K$ such that the family $\G$ is locally finite.
\begin{enumerate}
\item If $R^n\subseteq \mathcal{W}(R)$ for some $n>1$, where $\mathcal{W}(R)$ is the Wedderburn radical of $R$, then the $K$-algebra $R\langle T,\G,\D\rangle^*$ is locally nilpotent.

\item If the prime radical $P=P(R)$ of $R$ is strongly invariant under $\mathscr{D}$, then the ideal 
$P\langle T,\G,\D\rangle^*$ of $R\langle T,\G,\D\rangle$ is locally nilpotent.
\end{enumerate}}

The example presented in the introduction of \cite{BMS}, shows that a finiteness condition on the family $\G$ in the Theorem A is necessary. 

In Section 2 we apply Theorem A(2) to algebras over fields of characteristic zero  having locally nilpotent derivations. There is an abundance of literature related to kernels of locally nilpotent derivations. See for example \cite{F,N} and the references therein. The study of these kernels has turned out to be very fruitful in the solution of several problems in affine algebraic geometry. In Section 2 we consider non-commutative algebras
with locally nilpotent derivations having commutative kernels. We prove 
\medskip

\noindent {\bf Theorem B.} {\it Let $R$ be a $K$-algebra with a surjective locally nilpotent derivation $d$ such that $R$ is generated by $R_1=\ker d^2$. If $R^d$ is commutative, then the Jacobson radical $J(R)$ of $R$ is locally nilpotent. }

\section{Locally nilpotent skew extensions}
Throughout this section $K$ will be a commutative ring with $1$. For any $K$-algebra $R$, let $\mathbf{Aut}_K(R)$ be the group of all $K$-algebra automorphisms of $R$. Notice that any noncommutaive ring can be viewed as a $\mathbb{Z}$-algebra. An automorphism $\sigma\in \mathbf{Aut}_K(R)$ is said to be {\em locally finite}, if for any $v\in R$
the $K$-submodule generated by $\{\sigma^n(v)\mid n\geq 0\}$ is finitely generated.  For a subset $H$ of $\mathbf{Aut}_K(R)$ and a $K$-submodule $V$ of $R$, by $V(H)$ we denote the smallest $K$-submodule stable under any $\sigma\in H$. Observe that $V(H)=\{\sigma(v)\mid \sigma\in {\rm gr}(H),\   v\in V\}$,  where ${\rm gr}(H)$ is the subgroup of $\mathbf{Aut}_K(R)$ generated by $H$. A family $\G\subseteq \mathbf{Aut}_K(R)$ is said to be {\em locally finite},
if for any finite subset $G_0\subseteq \G$  and every finitely generated $K$-submodule $V$ of $R$, the $G_0$-stable 
$K$-submodule $V(G_0)=\{\sigma(v)\mid \sigma\in {\rm gr}(G_0), \  v\in V\}$ is finitely  generated. It is clear that if the group ${\rm gr}(\G)$ is locally finite, then the family $\G$ is locally finite. 

Let us fix a finite subset $T_0$ of $T$. Let $G_0$ and $D_0$ be the sequences
$$
G_0=\{\sigma_t\}_{t\in T_0} \  \  {\rm  and }\  \   D_0=\{\delta_t\}_{t\in T_0}.
$$

For $0\leq k\leq n$ let $\Delta_{n,k}$ be the set of all $K$-endomorphisms of $R$ of the form
$\eta_1\circ\eta_2\circ\dots\circ\eta_n$, where $\#\{i\mid \eta_i\in D_0\}=k$ and $\#\{i\mid \eta_i\in G_0\}=n-k$.
For a $K$-submodule $V$ of $R$ we put
$$
\Delta_{n,k}(V)=\{\varphi(v)\mid v\in V,\   \varphi\in \Delta_{n,k}\}.
$$

\begin{lem}
If $V$ is a finitely generated $K$-submodule of $R$, then $\Delta_{n,k}(V)$ is also a finitely generated $K$-module.
\end{lem}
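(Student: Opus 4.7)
The plan is to observe that for fixed $n$, $k$, and the finite set $T_0$, the collection $\Delta_{n,k}$ is itself a finite family of $K$-linear endomorphisms of $R$, after which the conclusion reduces to the elementary fact that a $K$-linear image of a finitely generated $K$-module is finitely generated.

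Concretely, since $T_0$ is finite, the sequences $G_0=\{\sigma_t\}_{t\in T_0}$ and $D_0=\{\delta_t\}_{t\in T_0}$ are finite. A composition $\eta_1\circ\cdots\circ\eta_n\in\Delta_{n,k}$ is specified by (i) choosing which $k$ of the $n$ slots are occupied by a derivation and (ii) for each slot independently, selecting an element of $G_0$ or $D_0$ as appropriate. Hence $|\Delta_{n,k}|\leq\binom{n}{k}|G_0|^{n-k}|D_0|^k$, which is finite.

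Each $\sigma_t$ is a $K$-algebra automorphism, in particular a $K$-linear map, and each $\delta_t$ is, by the standing convention embedded in the definition of $\Delta_{n,k}$ as a set of $K$-endomorphisms, also $K$-linear. Consequently every $\varphi\in\Delta_{n,k}$ is a $K$-endomorphism of $R$, so if $V=Kv_1+\cdots+Kv_m$, then $\varphi(V)=K\varphi(v_1)+\cdots+K\varphi(v_m)$ is finitely generated. The $K$-submodule spanned by $\Delta_{n,k}(V)=\bigcup_{\varphi\in\Delta_{n,k}}\varphi(V)$ therefore equals $\sum_{\varphi\in\Delta_{n,k}}\varphi(V)$, a finite sum of finitely generated $K$-submodules, and is hence finitely generated, with an explicit generating set of cardinality at most $m\cdot|\Delta_{n,k}|$ given by the $\varphi(v_i)$.

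I do not anticipate any real obstacle: the argument is essentially bookkeeping on the size of $\Delta_{n,k}$ together with $K$-linearity. Notably, the local finiteness hypothesis on $\G$ plays no role at this stage, since the compositions in $\Delta_{n,k}$ have a prescribed finite length. That hypothesis should instead come into force in subsequent lemmas, where one needs to control submodules stable under the entire subgroup $\mathrm{gr}(G_0)\subseteq\mathbf{Aut}_K(R)$, i.e.\ modules of the form $V(G_0)$, rather than under a single composition of fixed length.
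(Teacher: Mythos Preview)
Your proof is correct and follows exactly the paper's approach: the paper simply notes that $\Delta_{n,k}$ is a finite set and that $\Delta_{n,k}(V)=\sum_{\varphi\in\Delta_{n,k}}\varphi(V)$ is a finite sum of finitely generated $K$-modules. Your version is more detailed (the explicit cardinality bound and the remark that local finiteness of $\G$ is not yet needed are both accurate), but the underlying argument is identical.
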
\label{fin_gen}
\begin{pf}
Notice that the set $\Delta_{n,k}$ is finite and
$$
\Delta_{n,k}(V)=\sum_{\varphi\in\Delta_{n,k}}\varphi(V).
$$
Hence $\Delta_{n,k}(V)$ is finitely generated as a finite sum of finitely generated $K$-modules.
\qed \end{pf}

\begin{lem}\label{V}
If $\G$ is a locally finite family of automorphisms of $R$, $T_0$ is a finite subset of $T$, and $V$ is a finitely generated $K$-submodule of $R$, then for any $k\geq 0$
$$
\sum_{n\geq k} \Delta_{n,k}(V)
$$
is contained in a finitely generated $G_0$-stable  $K$-submodule $\mathbf{V}_k$ of $R$.
\end{lem}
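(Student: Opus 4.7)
The plan is to construct $\mathbf{V}_k$ by induction on $k$. For the base case $k=0$, every element of $\Delta_{n,0}$ is a composition of automorphisms from $G_0$ and so lies in $\text{gr}(G_0)$; hence $\sum_{n\ge 0}\Delta_{n,0}(V)\subseteq V(G_0)$. Local finiteness of $\G$ makes $V(G_0)$ finitely generated and $G_0$-stable, so one takes $\mathbf{V}_0:=V(G_0)$.

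For the inductive step, assuming $\mathbf{V}_{k-1}$ has been built, I would set
$$
U_k:=\sum_{t\in T_0}\delta_t(\mathbf{V}_{k-1}), \qquad \mathbf{V}_k:=U_k(G_0).
$$
Since $T_0$ is finite, each $\delta_t$ is $K$-linear, and $\mathbf{V}_{k-1}$ is finitely generated by induction, $U_k$ is a finite sum of finitely generated $K$-modules and thus finitely generated. Local finiteness of $\G$ then guarantees that $\mathbf{V}_k=U_k(G_0)$ is finitely generated, and it is $G_0$-stable by construction.

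To finish I would show that $\Delta_{n,k}(V)\subseteq \mathbf{V}_k$ for every $n\ge k$. Given $\varphi=\eta_1\circ\cdots\circ\eta_n\in\Delta_{n,k}$ and $v\in V$, let $i_1<i_2<\cdots<i_k$ be the positions at which $\eta_{i_j}\in D_0$. Evaluating $\varphi(v)$ from right to left: the rightmost stretch $\eta_{i_k+1}\circ\cdots\circ\eta_n$ consists entirely of elements of $G_0$ and sends $v$ into $V(G_0)=\mathbf{V}_0$; applying $\eta_{i_k}=\delta_{t_{i_k}}$ lands us in $U_1\subseteq\mathbf{V}_1$; the next run $\eta_{i_{k-1}+1}\circ\cdots\circ\eta_{i_k-1}$ of automorphisms keeps us in $\mathbf{V}_1$ because $\mathbf{V}_1$ is $G_0$-stable; applying $\eta_{i_{k-1}}$ moves us into $U_2\subseteq\mathbf{V}_2$; and so on. After all $k$ derivations have been processed we are inside $\mathbf{V}_k$, and the remaining leftmost block $\eta_1\circ\cdots\circ\eta_{i_1-1}$ of automorphisms preserves $\mathbf{V}_k$ by its $G_0$-stability. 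Therefore $\varphi(v)\in\mathbf{V}_k$, as required.

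I do not foresee a serious obstacle in carrying out this plan. The conceptual point is that the definition of a locally finite family was arranged precisely so that $G_0$-stable hulls of finitely generated submodules remain finitely generated, and this is exactly what is needed at each stage of the recursion to prevent the iterated application of the $\delta_t$'s from producing an infinitely generated module.
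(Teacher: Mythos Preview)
Your proposal is correct and follows essentially the same approach as the paper: the construction $\mathbf{V}_0=V(G_0)$ and $\mathbf{V}_k=\big(\sum_{t\in T_0}\delta_t(\mathbf{V}_{k-1})\big)(G_0)$ is exactly the paper's definition (the paper writes $W$ for your $U_k$). The only cosmetic difference is that the paper verifies $\Delta_{n,k}(V)\subseteq\mathbf{V}_k$ by peeling off the leftmost block $\sigma_1\circ\cdots\circ\sigma_l\circ\delta$ and invoking the induction hypothesis on the remaining $\eta'\in\Delta_{n',k-1}$, whereas you unroll the same recursion explicitly from right to left; the two arguments are equivalent.
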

\begin{pf}
We will use the induction on $k$. For $k=0$, since the family $\G$ is locally finite, it is enough to put $\mathbf{V}_0=V(G_0)$. Suppose that $k>0$ and $\mathbf{V}_{k-1}$ is a finitely generated $G_0$-stable $K$-submodule of $R$ containing all modules of the form $\Delta_{n,k-1}(V)$, where $n\geq k-1$. Consider $W=\sum\limits_{\delta\in D_0}\delta(\mathbf{V}_{k-1})$. Then $W$ is a finitely generated $K$-module as a finite sum of homomorphic images of finitely generated modules. Put $\mathbf{V}_k= W(G_0)$. We will prove that for any $n\geq k$ $\Delta_{n,k}(V)\subseteq \mathbf{V}_k$. To this end, notice that if $\eta\in\Delta_{n,k}$, then there exist $\sigma_1,\dots\sigma_l\in G_0$, $\delta\in D_0$ and $\eta^\prime\in \Delta_{n^\prime,k-1}$ (where $n^\prime\geq k-1$) such that
$$
\eta=\sigma_1\circ\dots\circ\sigma_l\circ\delta\circ\eta^\prime.
$$
Now it is clear that 

$$
\begin{array}{rl}
\eta(V)& =(\sigma_1\circ\dots\circ\sigma_l\circ\delta)(\eta^\prime(V))\subseteq (\sigma_1\circ\dots\circ\sigma_l\circ\delta)(\mathbf{V}_{k-1})\\ \\
&= \sigma_1\circ\dots\circ\sigma_l(\delta(\mathbf{V}_{k-1}))\subseteq \sigma_1\circ\dots\circ\sigma_l(W)\subseteq W(G_0)=\mathbf{V}_k.
\end{array}
$$
This finishes the proof.
\qed \end{pf}

For a sequence $\ga=(a_1,a_2,\dots,a_m)$ of nonnegative integers let $s(\ga)=\sum\limits_{i=1}^{m} a_i$ and let $\mathscr{B}(\ga)$ be the set of all sequences of nonnegative integers $\gb=(b_1,b_2,\dots,b_m)$ such that

$$b_1+\dots+ b_k\leq a_1+\dots+a_k \  \  {\rm  for\  all\  }\  \ k=1,\dots, m.$$
 
For a sequence of integers $\x=(x_1,x_2,\dots,x_m)$  and $k\leq  m$, let $\widehat{x}_k=x_1+\dots+x_k$ .

\begin{lem}\label{product}
Let $V$ be  a $K$-submodule of $R$,  $T_0$ be a finite subset of $T$ with corresponding sets of automorphisms $G_0$ and skew derivations $D_0$. If   $\ga=(a_1,a_2,\dots,a_m)$ is a sequence of positive integers, then
 \bigskip
 \begin{equation}\label{*}
 T_0^{a_1}VT_0^{a_2}\dots VT_0^{a_m}V\subseteq \sum_{\gb\in \mathscr{B}(\ga)}\Delta_{{\widehat{a}_1,b_1}}(V)\Delta_{\widehat{a}_2-\widehat{b}_1,{b_2}}(V)\dots\Delta_{{\widehat{a}_m-\widehat{b}_{m-1},b_m}}(V)T_0^{s(\ga)-s(\gb)}
\end{equation}
\end{lem}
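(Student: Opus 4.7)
The plan is to prove the inclusion by a double induction whose only ring-theoretic input is the commutation rule $tr=\si_t(r)t+\de_t(r)$, $t\in T_0$, $r\in R$. Each use of the rule moves one $t\in T_0$ past one element of $R$, replacing that element either by its $\si_t$-image (and retaining $t$ on the right) or by its $\de_t$-image (and absorbing $t$). Iterating this systematically over all $T_0$-powers will yield the stated bound, with the combinatorics encoded by $\mathscr{B}(\ga)$.

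The first step is to isolate the single-block case: for every $n\ge 0$ and every $K$-submodule $V$ of $R$,
$$
T_0^n V\ \subseteq\ \sum_{k=0}^n \De_{n,k}(V)\, T_0^{n-k}. \qquad (\star)
$$
I would prove $(\star)$ by induction on $n$. The case $n=0$ is trivial since $\De_{0,0}$ consists of the identity, and $n=1$ is just the commutation rule. For the inductive step, writing $T_0^{n+1}V=T_0\cdot T_0^n V$ and using the hypothesis, each summand $\varphi(v)T_0^{n-k}$ with $\varphi\in\De_{n,k}$ satisfies
$$
t\cdot\varphi(v)=(\si_t\circ\varphi)(v)\,t+(\de_t\circ\varphi)(v),
$$
and since $\si_t\circ\varphi\in\De_{n+1,k}$ while $\de_t\circ\varphi\in\De_{n+1,k+1}$, the result splits correctly into $\De_{n+1,k}(V)T_0^{n-k+1}$ and $\De_{n+1,k+1}(V)T_0^{n-k}$.

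Next I would prove (\ref{*}) by induction on $m$. The base case $m=1$ is exactly $(\star)$ with $n=a_1$, as $\mathscr{B}((a_1))=\{b_1:0\le b_1\le a_1\}$. For the step, apply the hypothesis to the prefix $T_0^{a_1}V\cdots T_0^{a_m}V$ with $\ga'=(a_1,\dots,a_m)$, obtaining summands
$$
\De_{\widehat{a}_1,b_1}(V)\cdots \De_{\widehat{a}_m-\widehat{b}_{m-1},b_m}(V)\,T_0^{\,s(\ga')-s(\gb')},\qquad \gb'\in\mathscr{B}(\ga').
$$
Right-multiplication by $T_0^{a_{m+1}}V$ fuses the trailing $T_0$-block with the next one into $T_0^{\,s(\ga')-s(\gb')+a_{m+1}}V$, and since $s(\ga')-s(\gb')+a_{m+1}=\widehat{a}_{m+1}-\widehat{b}_m$, applying $(\star)$ produces, for each $b_{m+1}$ with $0\le b_{m+1}\le\widehat{a}_{m+1}-\widehat{b}_m$, the factor $\De_{\widehat{a}_{m+1}-\widehat{b}_m,b_{m+1}}(V)\,T_0^{\widehat{a}_{m+1}-\widehat{b}_{m+1}}$. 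One checks $\widehat{a}_{m+1}-\widehat{b}_{m+1}=s(\ga)-s(\gb)$, and the constraint $\widehat{b}_{m+1}\le\widehat{a}_{m+1}$ together with $\gb'\in\mathscr{B}(\ga')$ is exactly $\gb=(\gb',b_{m+1})\in\mathscr{B}(\ga)$, closing the induction.

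The main obstacle is purely notational: keeping the double indexing on $\De_{n,k}$ consistent with the prefix-sum bookkeeping at each merging step, and verifying that the endomorphism-block size $\widehat{a}_k-\widehat{b}_{k-1}$ and the residual $T_0$-tail $s(\ga)-s(\gb)$ match after each application of $(\star)$. Once $(\star)$ is established, no further ring-theoretic input is needed; the multi-block inclusion is a direct iteration.
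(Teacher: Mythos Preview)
Your proposal is correct and follows essentially the same approach as the paper: first establish the single-block inclusion $T_0^nV\subseteq\sum_{k=0}^n\De_{n,k}(V)T_0^{n-k}$ (which the paper records as equation~(\ref{m=1}) and says ``follows immediately'' from the commutation rule, while you spell out the induction on $n$), then induct on $m$ by applying the hypothesis to $\ga'=(a_1,\dots,a_m)$, fusing the trailing $T_0$-block with $T_0^{a_{m+1}}$, and reapplying the single-block inclusion. Your verification of the bookkeeping for $\mathscr{B}(\ga)$ and the residual exponent $s(\ga)-s(\gb)$ is exactly what the paper's final displayed chain of inclusions encodes.
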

\begin{pf}
We will use the induction to prove this lemma. From the formula $tr=\sigma_t(r)t+\delta_t(r)$ it follows immediately that
\begin{equation}\label{m=1}
T_0^{a_1}V\subseteq\sum_{k=0}^{a_1}\Delta_{a_1,k}(V)T_0^{a_1-k},
\end{equation}
what proves (\ref{*}) for $m=1$. We assume that the inclusion (\ref{*}) holds for a given $m\geq 1$ and  arbitrary sequence $(a_1,\dots,a_m)$ of positive integers. Suppose that $\ga=(a_1,\dots,a_m,a_{m+1})$ and let $\ga^\prime=(a_1,\dots,a_m)$. Since $s(\ga^\prime)+a_{m+1}=s(\ga)$, using induction hypothesis and (\ref{m=1}) we obtain 
$$
\begin{array}{ll}
T_0^{a_1}VT_0^{a_2}  \dots VT_0^{a_{m+1}}V  \subseteq & \\ \\
\left(\sum\limits_{\gb^\prime\in \mathscr{B}(\ga^\prime)}\Delta_{{\widehat{a}_1,b_1}}(V)\Delta_{\widehat{a}_2-\widehat{b}_1,{b_2}}(V)\dots\Delta_{{\widehat{a}_m-\widehat{b}_{m-1},b_m}}(V)T_0^{s(\ga^\prime)-s(\gb^\prime)}\right) T_0^{a_{m+1}}V=&
\\ \\
\left(\sum\limits_{\gb^\prime\in \mathscr{B}(\ga^\prime)}\Delta_{{\widehat{a}_1,b_1}}(V)\Delta_{\widehat{a}_2-\widehat{b}_1,{b_2}}(V)\dots\Delta_{{\widehat{a}_m-\widehat{b}_{m-1},b_m}}(V)\right)T_0^{s(\ga)-s(\gb^\prime)}V\subseteq 
&\\ \\
\left(\sum\limits_{\gb^\prime \in \mathscr{B}(\ga^\prime)}\Delta_{{\widehat{a}_1,b_1}}(V)\Delta_{\widehat{a}_2-\widehat{b}_1,{b_2}}(V)\dots\Delta_{{\widehat{a}_m-\widehat{b}_{m-1},b_m}}(V)\right)\times &\\ \\
\left(\sum\limits_{k=0}^{s(\ga)-s(\gb^\prime)}\Delta_{s(\ga)-s(\gb^\prime),k}(V)T_0^{s(\ga)-s(\gb^\prime)-k} \right)=
&\\ \\

\sum\limits_{\gb \in \mathscr{B}(\ga)}\Delta_{{\widehat{a}_1,b_1}}(V)\Delta_{\widehat{a}_2-\widehat{b}_1,{b_2}}(V)\dots\Delta_{{\widehat{a}_m-\widehat{b}_{m-1},b_m}}(V)\Delta_{{\widehat{a}_{m+1}-\widehat{b}_{m},b_{m+1}}}(V)T_0^{s(\ga)-s(\gb)}.
\end{array}
$$
This finishes the proof.
\qed \end{pf}

For any ring $R$, let $\mathcal{W}(R)$ be the sum of all nilpotent ideals of $R$. $\mathcal{W}(R)$ is called the Wedderburn radical of $R$. Recall that S.A. Amitsur proved in \cite{A} that if $R$ is a PI-ring of degree $d$, and $S$ is a nil subring of $R$, then $S^{m}\subseteq \mathcal{W}(R)$, where $m\leq \lfloor d/2\rfloor$.Therefore, the assumptions of our first main result seem to be natural.

\begin{thm}\label{nilp}
Let $R\langle T;\G,\D\rangle$ be a skew extension  of a $K$-algebra $R$ such that the family $\G$ is locally finite. Suppose that $R^n\subseteq \mathcal{W}(R)$ for some $n>1$. Then the $K$-algebra $R\langle T;\G,\D\rangle^*$ is locally nilpotent.
\end{thm}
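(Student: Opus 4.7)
The plan is to take a finitely generated $K$-subalgebra $B \subseteq R\langle T;\G,\D\rangle^*$ and show it is nilpotent. After possibly enlarging, I may assume $B$ is generated by $V \cup T_0$, where $V$ is a finitely generated $K$-submodule of $R$, $T_0$ is a finite subset of $T$, and $c$ is an upper bound on the $T_0$-lengths of the original generators. Applying Lemma \ref{V} produces finitely generated $G_0$-stable submodules $\mathbf{V}_0, \mathbf{V}_1, \ldots$ of $R$ satisfying $V \subseteq \mathbf{V}_0$, $\delta(\mathbf{V}_k) \subseteq \mathbf{V}_{k+1}$ for every $\delta \in D_0$, and $\sum_{n\ge k} \Delta_{n,k}(V) \subseteq \mathbf{V}_k$. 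By Lemma \ref{product}, any product of $N$ generators of $B$ lies in a finite sum of modules $\mathbf{V}_0 \mathbf{V}_{b_1} \cdots \mathbf{V}_{b_{N-1}} T_0^*$ with $b_i \ge 0$ and $\sum_i b_i \le c(N-1)$; once $N \ge n$, the $R$-coefficient is a product of at least $n$ $K$-submodules of $R$, so it lies in $R^n \subseteq \mathcal{W}(R)$.

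Specialising to $N = n$, the indices $b_i$ satisfy $b_i \le c(n-1)$, so every coefficient of a product of $n$ generators lies in $\mathbf{U}^n$, where $\mathbf{U} = \mathbf{V}_0 + \cdots + \mathbf{V}_{c(n-1)}$ is a finitely generated $K$-submodule of $R$. Since $\mathbf{U}^n$ is a finitely generated subset of $\mathcal{W}(R)$, it lies inside a finite sum of nilpotent ideals of $R$, hence inside a single nilpotent ideal. Enlarging by its $G_0$-orbit, which remains finitely generated as an ideal by local finiteness of $\G$ and stays nilpotent because a finite sum of nilpotent ideals is nilpotent, I obtain a $G_0$-stable nilpotent ideal $J$ of $R$ with $J^p = 0$ for some $p$, containing every such coefficient. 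A crucial auxiliary observation is that any $G_0$-stable nilpotent ideal $J$ of $R$ is automatically \emph{strongly $D_0$-invariant}: expanding $\delta(j_1 \cdots j_k)$ by the Leibniz rule places each resulting summand in $J^{i-1} \cdot R \cdot J^{k-i} \subseteq J^{k-1}$, so $\delta(J^k) \subseteq J^{k-1}$ for every $k \ge 1$ and $\delta \in D_0$.

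Consequently $B^n$ is a finitely generated subring of the skew extension of $J$, and the theorem will follow once this subring is shown to be nilpotent. I expect the hard step, and the main obstacle, to be this last reduction. A naive ``multiply $p$ blocks each with coefficient in $J$'' computation does not land in $J^p = 0$, because commuting a $T_0$-letter past an element of $J$ produces terms involving $\delta(J)$, and strong invariance only guarantees $\delta(J^k) \subseteq J^{k-1}$ rather than $\delta(J) \subseteq J$. The resolution is a weight-tracking induction on the nilpotency class $p$: using strong invariance together with the fact that only finitely many $\mathbf{V}_{b_i}$ appear in a product of bounded length, one shows that the ``$J$-weight'' of a normalised term drops by at most one per commutation, so that products of sufficiently many generators accumulate enough weight to lie in $J^p = 0$. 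This yields $B^{nq} = (B^n)^q = 0$ for some explicit $q$, proving that $B$ is nilpotent.
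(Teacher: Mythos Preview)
Your setup via Lemmas~\ref{V} and~\ref{product} is correct, and so is the observation that any product of exactly $n$ generators has its $R$-coefficient inside a fixed finitely generated submodule of $\mathcal{W}(R)$, hence inside a nilpotent ideal. The gap is in the last step, and it is a real one.

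The two-stage reduction you propose --- first land in a nilpotent $G_0$-stable ideal $J$, then iterate --- does not close. You correctly note that the naive ``multiply $p$ blocks'' argument fails because $\delta(J)\not\subseteq J$. But the proposed repair does not work either: your ``$J$-weight drops by at most one per commutation'' heuristic gives nothing, since the number of commutations in a product of $q$ blocks is of order $cnq$, which swamps the $q$ units of $J$-weight you start with. The inclusion $\delta(J^k)\subseteq J^{k-1}$ is only useful if you already have high powers of $J$ to spend, and here each block only contributes $J^1$. More concretely: when you expand a product of $nq$ generators by Lemma~\ref{product}, the coefficient is a sum of terms $\mathbf{V}_{b_1}\cdots\mathbf{V}_{b_{nq}}$, and for a given block of $n$ consecutive factors there is \emph{no} bound on the individual $b_i$'s --- the bound $\sum b_i\le c(n-1)$ that you established holds only when that block is multiplied out in isolation, not when it sits inside a longer product. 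So most blocks need not lie in your $\mathbf{U}^n$ at all.

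The paper's resolution is a single-stage pigeonhole argument that sidesteps this. One fixes in advance the finite family $\mathscr{F}=\{\mathbf{V}_{k_1}\cdots\mathbf{V}_{k_n}\mid k_1+\cdots+k_n\le 2nN\}$ (note the factor $2$), finds a nilpotent ideal $I\supseteq\bigcup\mathscr{F}$ with $I^s=0$, and then analyzes a product of $m=2ns$ generators directly. For any $\mathbf{b}\in\mathscr{B}(\mathbf{a})$ one has $\sum_{i=1}^{m}b_i\le\sum_{i=1}^{m}a_i\le mN=2snN$, so among the $2s$ consecutive $n$-blocks at most $s$ can have index-sum exceeding $2nN$; the remaining $s$ blocks lie in $\mathscr{F}\subseteq I$, and the whole coefficient lies in $I^s=0$. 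This averaging argument is exactly what is missing from your sketch, and it replaces the unworkable weight-tracking with a clean counting step.
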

\begin{pf}
Notice that any finite subset of $R\langle T,\G,\D\rangle^*$ is contained in 
$$
\sum_{i=0}^N VT_0^i,
$$
where $N\geq 0$,  $V$ is a finitely generated $K$-submodule of $R$, $T_0$ is a finite subset of $T$ with corresponding finite sequences of automorphisms $G_0=\{\sigma_t\}_{t\in T_0}$ and skew derivations $D_0=\{\delta_t\}_{t\in T_0}$.
We need  to prove that 
$$
\left(\sum\limits_{i=0}^N VT_0^i\right)^l=0,
$$ 
for some $l>1$.
Consider the family $$
\mathscr{F}=\{\mathbf{V}_{k_1}\cdot\mathbf{V}_{k_2}\cdot \ldots\cdot\mathbf{V}_{k_n}\mid k_1+k_2+\ldots+k_n\leq 2nN \},
$$
where $\mathbf{V}_{k_i}$ is a finitely generated $K$-submodule of $R$ containing all $\Delta_{j,k_i}$ for $j\geq k_i$ (defined in Lemma \ref{V}.) It is clear that the family $\mathscr{F}$ is finite and any member of $\mathscr{F}$ is finitely generated as a $K$-module. Since $R^n\subseteq \mathcal{W}(R)$, we obtain that
$
\bigcup \mathscr{F}\subseteq \mathcal{W}(R)
$.
This forces that there exists a nilpotent ideal $I$ of $R$ containing any $K$-module from $\mathscr{F}$. Let $s$ be such that $I^s=0$. We will prove that
$$
\left(\sum\limits_{i=0}^N VT_0^i\right)^{2ns}=0.
$$ 
To this end it suffices to show that
$$
T_0^{a_1}VT_0^{a_2}\ldots VT_0^{a_m}V=0,
$$
where $m=2ns$ and  $\ga=(a_1,a_2,\dots,a_m)$ is a sequence  of integers such that $0\leq a_k\leq N$.
Notice that for any sequence $\gb=(b_1,b_2,\dots,b_m)\in \mathscr{B}(\ga)$
$$
\Delta_{{\widehat{a}_1,b_1}}(V)\Delta_{\widehat{a}_2-\widehat{b}_1,{b_2}}(V)\dots\Delta_{{\widehat{a}_m-\widehat{b}_{m-1},b_m}}(V)\subseteq \mathbf{V}_{b_1}\cdot\mathbf{V}_{b_2}\cdot \ldots\cdot\mathbf{V}_{b_m}
= 
$$
$$
(\mathbf{V}_{b_1}\cdot\mathbf{V}_{b_2}\cdot \ldots\cdot\mathbf{V}_{b_n})(\mathbf{V}_{b_{n+1}}\cdot\mathbf{V}_{b_{n+2}}\cdot \ldots\cdot\mathbf{V}_{b_{2n}})\ldots (\mathbf{V}_{b_{(2s-1)n+1}}\cdot\mathbf{V}_{b_{(2s-1)n+2}}\cdot \ldots\cdot\mathbf{V}_{b_{2sn}}).
$$
Notice that among $2s$ disjoint consecutive products  of the form $\mathbf{V}_{b_{kn+1}}\cdot\mathbf{V}_{b_{kn+2}}\cdot \ldots\cdot\mathbf{V}_{b_{(k+1)n}}$ (where $k=0,1,\dots 2s-1$) at least $s$ of those satisfy the inequality 
$$
b_{kn+1}+b_{kn+2}+\ldots+b_{(k+1)n}\leq 2nN
$$
Otherwise, there are $s+1$ disjoint products satisfying $b_{kn+1}+b_{kn+2}+\ldots+b_{(k+1)n}> 2nN$, what implies that
$$
mN\geq a_1+a_2+\ldots+a_m\geq b_1+b_2+\ldots+b_m >(s+1)2nN,
$$
a contradiction with $m=2ns$. Thus at least $s$ products $\mathbf{V}_{b_{kn+1}}\cdot\mathbf{V}_{b_{kn+2}}\cdot\ldots\cdot\mathbf{V}_{b_{(k+1)n}}$ are contained in $I$, so
$$
\Delta_{{\widehat{a}_1,b_1}}(V)\Delta_{\widehat{a}_2-\widehat{b}_1,{b_2}}(V)\dots\Delta_{{\widehat{a}_m-\widehat{b}_{m-1},b_m}}(V)\subseteq I^s=0.
$$
Now Lemma \ref{product} finishes the proof.
\qed \end{pf}

Since any nil PI-algebra $R$ satisfies assumptions of Theorem \ref{nilp}, we obtain

\begin{cor}
Let $R$ be a nil $K$-algebra satisfying a polynomial identity. Then any free skew extension $R[X;\G,\D]$ with
a locally finite  family $\G$ of automorphisms is locally nilpotent.
\end{cor}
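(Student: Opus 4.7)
The plan is to derive this corollary as an immediate consequence of Theorem \ref{nilp}, using the theorem of Amitsur quoted just above it to verify the hypothesis $R^n \subseteq \mathcal{W}(R)$.

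First, I would apply Amitsur's theorem with $S = R$, viewing $R$ as a nil subring of itself: if $d$ denotes the PI-degree of $R$, this produces an integer $m \leq \lfloor d/2\rfloor$ for which $R^m \subseteq \mathcal{W}(R)$. To guarantee the strict inequality $n > 1$ required by Theorem \ref{nilp} (which could fail in degenerate cases such as $d=2$, where Amitsur only gives $m \leq 1$), I would then pass to $n = \max(m, 2)$; this still satisfies $R^n \subseteq R^m \subseteq \mathcal{W}(R)$ and has $n > 1$.

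With the hypothesis verified, Theorem \ref{nilp} applies directly to the skew extension $R[X;\G,\D]$ with its locally finite family $\G$ and gives that $R[X;\G,\D]^*$ is locally nilpotent. To conclude I would observe that any nontrivial nil algebra $R$ has no identity element (an identity $1$ would be nilpotent and hence zero), so by the explicit definition of the free skew extension as the set of polynomials $\sum r_\Delta \Delta$ with coefficients $r_\Delta \in R$, we have the identification $R[X;\G,\D] = R[X;\G,\D]^*$, finishing the proof.

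There is no genuine obstacle here: the corollary is essentially a one-line combination of Amitsur's structural result on nil subrings of PI-rings with Theorem \ref{nilp}. The only subtlety worth mentioning is the bookkeeping at small PI-degrees, handled by inflating $m$ to $\max(m,2)$, and the observation that for a nil $R$ the free skew extension automatically coincides with its starred version.
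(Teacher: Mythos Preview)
Your proposal is correct and matches the paper's approach exactly: the paper states the corollary without proof, simply noting that ``any nil PI-algebra $R$ satisfies assumptions of Theorem~\ref{nilp},'' which is precisely the one-line Amitsur-plus-Theorem~\ref{nilp} argument you spell out. Your bookkeeping on passing from $m$ to $\max(m,2)$ and on identifying $R[X;\G,\D]$ with $R[X;\G,\D]^*$ for nil $R$ is appropriate detail that the paper leaves implicit.
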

\medskip

A useful property of the prime radical of $R$ is that it can also be defined by transfinite induction as the union of an ascending chain of ideals $P_\alpha \subseteq R$
as follows:
\begin{enumerate}
	\item  $P_0=0 $, $P_1=\mathcal{W}(R)$;
	\item  $P_{\alpha+1}$ is the ideal of $R$ such that $\mathcal{W}(R/P_{\alpha})= P_{\alpha+1}/P_{\alpha}$;
	\item  if $\alpha$ is a limit ordinal, then $P_{\alpha}=\bigcup\limits_{\beta<\alpha}P_\beta$.
\end{enumerate}
\vskip.1in

Observe that each $P_\alpha$ is stable under any automorphism $\sigma\in \mathbf{Aut}_K(R)$ and  there exists an ordinal $\gamma$ such that $P_\gamma=P_{\gamma+1} = P(R)$. We will say that the prime radical $P(R)$ is {\em strongly invariant} under  a family  of skew derivations $\mathscr{D}$ if 
$$
\delta(P_\alpha)\subseteq P_\alpha \  \  {\rm  for\  any \ ordinal }\  \   \alpha \  \ {\rm and }\  \  \delta\in\mathscr{D}.
$$

\noindent{\bf Remark.} By  \cite[Theorem 6]{BG2} the prime radical of an algebra $R$ over a field $K$ is strongly invariant under
\begin{enumerate}
\item all derivations, provided char\,$K=0$;
\item $q$-skew $\sigma$-derivations $\delta$; that is when $\delta\sigma=q\sigma\delta$, $q\in K^\times$, $\sigma$ is locally finite and $1+q+q^2+\dots+q^n\neq 0$ for all $n>0$.
\end{enumerate}

\medskip

\begin{thm}\label{prime_rad}
Let $R\langle T,\G,\D\rangle$ be a skew extension  of a $K$-algebra $R$ such that the family $\G$ is locally finite, and the prime radical $P=P(R)$ is strongly invariant under $\mathscr{D}$. Then the ideal $P\langle T,\G,\D\rangle^*$ of $R\langle T,\G,\D\rangle$ is locally nilpotent.
\end{thm}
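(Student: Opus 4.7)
The plan is a transfinite induction on ordinals $\alpha$, proving that the ideal $I_\alpha:=P_\alpha\langle T,\G,\D\rangle^*$ of $R\langle T,\G,\D\rangle$ is locally nilpotent. The set $I_\alpha$ really is an ideal because $P_\alpha$ is $\G$-stable (automorphisms automatically preserve each $P_\alpha$) and $\D$-stable by the strong invariance hypothesis. Since $P=\bigcup_\alpha P_\alpha$ and any finitely generated subalgebra of $P\langle T,\G,\D\rangle^*$ involves only finitely many elements of $P$, it lies in $I_\gamma$ for some ordinal $\gamma$, so the statement for $\alpha=\gamma$ yields the theorem. The base case $\alpha=0$ is trivial, and at a limit ordinal $\lambda$ every finitely generated subalgebra of $I_\lambda=\bigcup_{\beta<\lambda}I_\beta$ already sits inside some $I_\beta$, so the limit step is immediate from the inductive hypothesis.

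The content lies in the successor step $\alpha\to\alpha+1$, for which I pass to the quotient. Setting $\bar R=R/P_\alpha$, the relation $I_\alpha\cap R=P_\alpha$ identifies $R\langle T,\G,\D\rangle/I_\alpha$ with the skew extension $\bar R\langle T,\overline{\G},\overline{\D}\rangle$ under the induced automorphisms $\bar\sigma_t$ and skew derivations $\bar\delta_t$; local finiteness of $\G$ clearly descends to $\overline{\G}$. Under this identification $I_{\alpha+1}/I_\alpha$ becomes $\mathcal{W}(\bar R)\langle T,\overline{\G},\overline{\D}\rangle^*$, because $P_{\alpha+1}/P_\alpha=\mathcal{W}(\bar R)$ by construction of the transfinite chain. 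It therefore suffices to show that this latter ideal is locally nilpotent in the quotient ring; once that is in hand, the standard closure of the locally nilpotent radical under extensions (a finitely generated subring of $I_{\alpha+1}$ is nilpotent modulo $I_\alpha$, and the resulting finitely generated subring of $I_\alpha$ is itself nilpotent by the inductive hypothesis) completes the successor step.

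To prove local nilpotence of $\mathcal{W}(\bar R)\langle T,\overline{\G},\overline{\D}\rangle^*$, I would rerun the argument of Theorem \ref{nilp}, with the hypothesis $R^n\subseteq\mathcal{W}(R)$ replaced by two invariance statements for $\mathcal{W}(\bar R)$: invariance under $\overline{\G}$, which is automatic because any $K$-automorphism of $\bar R$ permutes nilpotent ideals, and invariance under $\overline{\D}$, which is exactly the translation to the quotient of the strong invariance $\delta(P_{\alpha+1})\subseteq P_{\alpha+1}$. For a finitely generated $K$-submodule $\bar V\subseteq\mathcal{W}(\bar R)$, a short induction along the construction in Lemma \ref{V} then shows that every $\mathbf{V}_k$ sits inside $\mathcal{W}(\bar R)$. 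Consequently the finite family of products $\mathbf{V}_{k_1}\cdots\mathbf{V}_{k_n}$ arising in the proof of Theorem \ref{nilp} lies in $\mathcal{W}(\bar R)$; as a finite union of finitely generated $K$-modules it fits inside a single nilpotent ideal $J$ of $\bar R$, and Lemma \ref{product} combined with the chunking argument of Theorem \ref{nilp}—now with the simplification that each individual factor $\mathbf{V}_{b_j}$ already lies in $J$, so that one may effectively take $n=1$—yields the required nilpotence bound.

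The main obstacle is the invariance $\bar\delta(\mathcal{W}(\bar R))\subseteq\mathcal{W}(\bar R)$: this is precisely where the strong invariance hypothesis enters the argument, and it is the technical input that distinguishes Theorem \ref{prime_rad} from Theorem \ref{nilp}. The remaining ingredients—that the quotient of a skew extension by a $\G$- and $\D$-invariant ideal of $R$ is again a skew extension of the quotient ring, and that local finiteness of $\G$ descends to the quotient family—are routine bookkeeping.
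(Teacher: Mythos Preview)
Your proof is the paper's proof in substance: transfinite induction along the chain $\{P_\alpha\}$, with the successor step carried by the observation that the modules $\mathbf V_k$ built from a finitely generated $V\subseteq P_{\alpha+1}$ remain in $P_{\alpha+1}$ (by $\G$- and $\D$-stability), hence sit in an ideal $I$ with $I^s\subseteq P_\alpha$, after which the pigeonhole argument of Theorem~\ref{nilp} (indeed with $n=1$, as you note) forces a power of $\sum_i VT_0^i$ into $P_\alpha\langle T,\G,\D\rangle^*$.

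The one point that needs repair is your packaging via the quotient $S/I_\alpha$. The assertion $I_\alpha\cap R=P_\alpha$ is not justified for a general skew extension: the representation $\sum r_\omega\omega$ is not unique, so an element of $R\setminus P_\alpha$ could in principle lie in $P_\alpha\langle T,\G,\D\rangle^*$. If the containment is strict, it is not even clear that $\sigma_t$ and $\delta_t$ descend to the image of $R$ in $S/I_\alpha$, so the quotient need not carry a skew-extension structure at all, and your appeal to ``the skew extension $\bar R\langle T,\overline{\G},\overline{\D}\rangle$'' is not available. The paper sidesteps this entirely by never forming a quotient: it shows directly that $\bigl(\sum_{i\le N} VT_0^i\bigr)^{2s}\subseteq\sum_{j\le m} WT_0^j$ for some finitely generated $W\subseteq P_\alpha$, and then applies the inductive hypothesis to the right-hand side. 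Your third paragraph already contains exactly this computation once the bars are dropped and everything is read as a statement about representatives in $P_{\alpha+1}\subseteq R$; the quotient is an unnecessary detour that introduces an unjustified claim.
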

\begin{pf}
We will show using transfinite induction that for any  ordinal $\alpha$, the ideal $P_\alpha\langle T,\G,\D\rangle^*$ is locally nilpotent. The case $\alpha=0$ is clear. Let $\alpha>0$ be an ordinal and suppose that the result is true for all $\beta<\alpha$. First consider the case when $\alpha$ is a non limit ordinal, i.e. $\alpha=\beta+1$ for some ordinal $\beta$. Let $A$ be a finite subset of $P_{\beta+1}\langle T;\G,\D\rangle^*$. Take a finitely generated  $K$-module $V\subseteq P_{\beta+1}$  and a finite subset $T_0\subseteq T$ such that $A\subseteq \sum\limits_{i=0}^NVT_0^i$. Our aim is to prove that there exists $j>1$ such that
$$
\left(\sum\limits_{i=0}^NVT_0^i\right)^j=0
$$
Let $G_0=\{\sigma_t\}_{t\in T_0}$ and $D_0=\{\delta_t\}_{t\in T_0}$ be  sequences of automorphisms and skew derivations corresponding to $T_0$. 
Similarly as in the proof of Theorem \ref{nilp} consider the family
$$
\mathscr{F}=\{\mathbf{V}_k\mid k\leq 2N\},
$$
where $\mathbf{V}_k$ are finitely generated $K$-modules defined in Lemma \ref{V}.
Since $P_{\beta+1}$ is stable under automorphisms and skew derivations, we conclude that for any $k\geq 1$, $\mathbf{V}_k\subseteq P_{\beta+1}$ and $\mathbf{V}_k$ is a $G_0$-stable finitely generated $K$-module containing all $\Delta_{n,k}(V)$, where $n\geq k$.
Thus we can find an ideal $I$ and integer $s>1$ such that $\bigcup\mathscr{F}\subseteq I$ and $I^s\subseteq P_\beta$.
We will show that 
$$
\left(\sum\limits_{i=0}^NVT_0^i\right)^{2s}\subseteq P_{\beta}\langle T;\G,\D\rangle^*.
$$

Let $\mathscr{A}$ be the set of all sequences $\ga=(a_1,a_2,\dots,a_{2s})$  of integers such that $0\leq a_k\leq N$. Take $\ga\in \mathscr{A}$.
Notice that for any sequence $\gb=(b_1,b_2,\dots,b_{2s})\in \mathscr{B}(\ga)$ the product $\mathbf{V}_{b_1}\cdot\mathbf{V}_{b_2}\cdot \ldots\cdot\mathbf{V}_{b_{2s}}$ contains at least $s$ terms from the family $\mathscr{F}$. Otherwise, the sequence
$\gb$ contains at least $s+1$ numbers satisfying inequality $b_i>2N$. Thus
$$
2sN\geq a_1+a_2+\ldots+a_{2s}\geq b_1+b_2+\ldots+b_{2s}>(s+1)2N,
$$
a contradiction. Consequently,
$$
\Delta_{{\widehat{a}_1,b_1}}(V)\Delta_{\widehat{a}_2-\widehat{b}_1,{b_2}}(V)\ldots\Delta_{{\widehat{a}_{2s}-\widehat{b}_{2s-1},b_{2s}}}(V)\subseteq \mathbf{V}_{b_1}\cdot\mathbf{V}_{b_2}\cdot \ldots\cdot\mathbf{V}_{b_{2s}}\subseteq I^s
\subseteq P_{\beta}.
$$
Putting $W=\sum\limits_{\ga\in \mathscr{A}}\sum\limits_{\gb\in \mathscr{B}(\ga)}\mathbf{V}_{b_1}\cdot\mathbf{V}_{b_2}\cdot \ldots\cdot\mathbf{V}_{b_{2s}}$
we see that $W\subseteq P_\beta$ is finitely generated as a $K$-module and there exists an integer $m>1$ such  that
$$
\left(\sum\limits_{i=0}^NVT_0^i\right)^{2s}\subseteq \sum\limits_{j=0}^mWT_0^j\subseteq P_{\beta}\langle T;\G,\D\rangle^*.
$$
Therefore by the induction hypothesis there exists $l>1$ such that $(\sum\limits_{j=0}^mWT_0^j)^l=0$, so
$$
\left(\sum\limits_{i=0}^NVT_0^i\right)^{2sl}=0.
$$

If $\alpha$ is a limit ordinal, then any finite subset $A\subseteq P_\alpha\langle T;\G,\D\rangle^*$ is contained in 
$P_\beta\langle T,\G,\D\rangle^*$ for some $\beta<\alpha$. Thus by the induction hypothesis, $A$ generates a nilpotent subalgebra. This finishes the proof.
\qed \end{pf}
\begin{cor}
Let $R$ be an algebra over a field $K$ and let $R[X;\G,\D]$ be a free skew extension such that
\begin{enumerate}
\item the family $\G=\{\sigma_x\}_{x\in X}$ is locally finite,
\item for any $x\in$ there exists $q_x\in K^\times$ such that $\delta_x\sigma_x=q_x\sigma_x\delta_x$, 
\item  for any $x\in X$ and $n>0$,  $1+q_x+\dots+q_x^n\neq 0$.
\end{enumerate}
Then $P[X;\G,\D]$ is a locally nilpotent ideal of $R[X;\G,\D]$.
\end{cor}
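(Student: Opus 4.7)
The plan is to deduce this corollary directly from Theorem \ref{prime_rad} by verifying its two hypotheses for the skew extension $R[X;\G,\D]$. Local finiteness of $\G$ is given by assumption (1), so the real work is to establish the strong invariance of the prime radical $P(R)$ under the family $\D=\{\delta_x\}_{x\in X}$.

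To that end, I would first observe that hypothesis (1), applied to the one-element subset $\{\sigma_x\}\subseteq \G$ together with any finitely generated $K$-submodule $V\subseteq R$, forces the orbit module $V(\{\sigma_x\})$ to be finitely generated; this is precisely the local finiteness of each individual automorphism $\sigma_x$ in the sense introduced at the beginning of Section~1. Combined with hypotheses (2) and (3), the data $(\sigma_x,\delta_x,q_x)$ therefore satisfies exactly the conditions in part~(2) of the Remark preceding Theorem \ref{prime_rad}: $\delta_x$ is a $q_x$-skew $\sigma_x$-derivation with $\sigma_x$ locally finite, $q_x\in K^\times$, and $1+q_x+\cdots+q_x^n\neq 0$ for all $n>0$. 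Invoking \cite[Theorem 6]{BG2} (as recorded in that Remark) for each $x\in X$ separately, I conclude that $\delta_x(P_\alpha)\subseteq P_\alpha$ for every ordinal $\alpha$ and every $x\in X$, which is exactly the strong invariance of $P(R)$ under $\D$.

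With both hypotheses of Theorem \ref{prime_rad} verified, that theorem applies to the skew extension $R[X;\G,\D]=R\langle X;\G,\D\rangle$ and yields at once that $P[X;\G,\D]^*$, i.e.\ the ideal of $R[X;\G,\D]$ generated by $P$, is locally nilpotent. No genuine obstacle arises: the corollary is essentially a repackaging of Theorem \ref{prime_rad} with the external input of the Remark, and the only nontrivial verification a reader must carry out is the matching of conditions (2)--(3) of the corollary with the $q$-skew hypothesis of the Remark, which is immediate.
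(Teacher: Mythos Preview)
Your proposal is correct and follows exactly the route the paper intends: the corollary is placed immediately after the Remark and Theorem~\ref{prime_rad} precisely so that one verifies the $q$-skew hypothesis of the Remark (using (2), (3), and the local finiteness of each $\sigma_x$ extracted from (1)) to obtain strong invariance of $P(R)$ under $\D$, and then applies Theorem~\ref{prime_rad}. The paper gives no separate proof, and your write-up supplies just the verification a reader would be expected to carry out.
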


In \cite[Theorem 2.4]{BG3} it is proved that if $R$ is a semiprime Goldie ring, then for any  set $X$ of noncommuting variables and sequences $\G=\{\sigma_x\}_{x\in X}$, $\D=\{\delta_x\}_{x\in X}$ of automorphisms and skew derivations of $R$, the free skew extension $R[X; \G,\D]$ is semiprimitive. As a consequence we obtain

\begin{cor}
Let $R$ be an algebra over a commutative ring $K$ such that the factor algebra $R/P(R)$ is semiprime Goldie. Let  $X$ be a set of noncommuting variables and  $\G=\{\sigma_x\}_{x\in X}$, $\D=\{\delta_x\}_{x\in X}$ be sequences of automorphisms and skew derivations of $R$ such that $\G$ is locally finite and $P=P(R)$ is strongly invariant under $\D$. Then the Jacobson radical
$J(R[X; \G,\D])$ is equal to $P[X; \G,\D]$. In particular,  $J(R[X; \G,\D])$ is locally nilpotent.
\end{cor}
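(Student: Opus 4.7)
The plan is to combine Theorem \ref{prime_rad} with the cited semiprimitivity result from \cite{BG3} by passing to the quotient modulo $P$. First, I would observe that the hypotheses on $\G$ and $\D$ are precisely those of Theorem \ref{prime_rad}, so $P[X;\G,\D]$ is locally nilpotent. Every locally nilpotent ideal is contained in the Jacobson radical, which gives the easy inclusion
$$
P[X;\G,\D]\subseteq J(R[X;\G,\D]).
$$

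For the reverse inclusion, I would exploit the invariance properties of $P$ to descend the skew data to $R/P$. The paper notes that each $P_\alpha$ is stable under every $\sigma\in\mathbf{Aut}_K(R)$, so $P$ is stable under all $\sigma_x\in\G$; strong invariance of $P$ under $\D$ gives that $P$ is also stable under every $\delta_x$. Consequently each $\sigma_x$ drops to an automorphism $\bar\sigma_x$ of $R/P$ and each $\delta_x$ drops to a $\bar\sigma_x$-derivation $\bar\delta_x$. Writing elements of the free skew extension in the normal form $\sum r_\Delta \Delta$ with $r_\Delta\in R$, one checks routinely that the induced surjection $R[X;\G,\D]\to (R/P)[X;\bar\G,\bar\D]$ has kernel exactly $P[X;\G,\D]$, yielding the isomorphism
$$
R[X;\G,\D]/P[X;\G,\D]\;\cong\;(R/P)[X;\bar\G,\bar\D].
$$
Since $R/P$ is semiprime Goldie by hypothesis, Theorem 2.4 of \cite{BG3} asserts that the right-hand side is semiprimitive, i.e.\ has zero Jacobson radical. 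Hence the image of $J(R[X;\G,\D])$ in the quotient is zero, so $J(R[X;\G,\D])\subseteq P[X;\G,\D]$, completing the equality. The \emph{in particular} statement is then immediate from the first paragraph.

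The only thing that requires care is the identification of the kernel of the quotient map as $P[X;\G,\D]$, but this is standard: any polynomial mapping to zero has all coefficients in $P$, and conversely any coefficient in $P$ produces a monomial lying in the ideal $P[X;\G,\D]$. I do not foresee a genuine obstacle; the corollary is essentially a direct marriage of Theorem \ref{prime_rad} with the quoted semiprimitivity result, mediated by the descent of $(\G,\D)$ to $R/P$ guaranteed by the invariance hypotheses.
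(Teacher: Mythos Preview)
Your proposal is correct and follows essentially the same route as the paper's own proof: apply Theorem \ref{prime_rad} to get $P[X;\G,\D]\subseteq J(R[X;\G,\D])$, pass to the quotient $R[X;\G,\D]/P[X;\G,\D]\cong (R/P)[X;\bar\G,\bar\D]$ using the invariance of $P$, and invoke Theorem 2.4 of \cite{BG3} for the reverse inclusion. You supply slightly more detail on the descent and on identifying the kernel, but the argument is the same.
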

\begin{pf}
We conclude from Theorem \ref{prime_rad} that $P[X;\G,\D]$ is a locally nilpotent ideal of 
the free skew extension $R[X;\G,\D]$. In particular,  $P[X;\G,\D]\subseteq J(R[X;\G,\D])$. Observe that 
$$
R[X;\G,\D]/P[X;\G,\D] \simeq
(R/P)[X;\overline{\G},\overline{\D}],
$$ 
where $\bar{\sigma}_x$, $\bar{\delta}_x$ are induced automorphisms and skew derivations of $R/P$. The algebra $R/P$ is semiprime Goldie, so by Theorem 2.4 of \cite{BG3} the free skew extension $(R/P)[X;\overline{\G},\overline{\D}]$ is semiprimitive. It means that
$J(R[X;\G,\D])=P[X;\G,\D]$. 
\qed \end{pf}

\section{Locally nilpotent derivations}

We say that a derivation $d$ of a ring $R$ is locally nilpotent, if for each $r \in R$, there exists $n = n(r) \geq 1$ such that $d^n(r) = 0$.
It then follows that if we let $R_n = \ker d^{n+1}= \{r \in R \mid d^{n+1}(r) = 0\}$, then
$$
R_0 \subseteq R_1 \subseteq R_2 \subseteq \cdots ,
$$
$R = \bigcup\limits_{n \geq 0} R_n$, $R_0=R^d$ is the invariants of $d$, and $R_1$ is the kernel of $d^2$.
By the degree of a non-zero element $a\in R$, which we denote as $\deg (a)$, we mean the integer $n$ such that $a \in R_n\setminus R_{n-1}$.
\medskip

In this section we  apply Theorem \ref{prime_rad} to algebras over fields of characteristic zero  having locally nilpotent derivations. In  \cite{BG1} we examined the Jacobson radical of algebras with locally nilpotent skew derivations in the case when the subalgebra of invariants has no non-zero nil ideals.
\medskip

Observe that if $R$ is a $K$-algebra with $1$ (char\,$K=0$), then any differential polynomial algebra $R[x;\delta]$
has a locally nilpotent derivation $d=\frac{d}{dx}$; that is $d(x)=1$ and $d(r)=0$ for all $r\in R$. In this example,
$R[x;\delta]^d=R$, $d$ is surjective and $R[x;\delta]$ is generated by $\ker d^2=R+Rx$. Another interesting example
of a surjective locally nilpotent derivation of a non-unital algebra we obtain by using the Grassmann algebra.
\begin{example}{\rm
Let $E$ be the Grassmann algebra over $\mathbb{Q}$ generated
by an infinite set $\{e_i\mid i>0\}$ with relations $e_ie_j+e_je_i=0$, for all $i,j>0$. It is well known that $E=\mathcal{W}(E)$ and $E$
does not satisfy a polynomial identity. The mapping  given by $d(e_{i+1})=e_i$ and $d(e_1)=0$ extends uniquely to a locally nilpotent derivation of $E$. We will show that $d$ is a surjection. For $n\geq 1$ let $B_n=\{e_{i_1}e_{i_2}\dots e_{i_n}\mid i_1<i_2<\dots< i_n\}$. Then $B=\bigcup\limits_{n\geq 1} B_n$ is a linear basis of $E$.
Hence it is enough to show that $B_n\subseteq \im d$. By the definition it follows that $B_1\subseteq \im d$.  Suppose that there exists
$n>1$ such that $B_n\not\subset \im d$. Observe that $B_n$ is well ordered with respect to lexicographical order $\prec\colon$
$$
e_{i_1}e_{i_2}\dots e_{i_n}\prec e_{j_1}e_{j_2}\dots e_{j_n} \  \  {\rm  if\  and \  only\   if\  for \  some  } \  \ k\leq n, \  \ i_k<j_k 
\  {\rm  and }\  \   i_s=j_s \  \  {\rm  for \ all } \  \  s<k.
$$
Since $d(e_1e_2\dots e_{n-1}e_{n+1})= e_1e_2\dots e_{n-1}e_n$, we have that $e_1e_2\dots e_{n-1}e_n\in \im d$.  Take the smallest element $e_{i_1}e_{i_2}\dots e_{i_n}$ in $B_n$ (with respect to $\prec$) such that $e_{i_1}e_{i_2}\dots e_{i_n}\not\in \im d$. Notice that
\begin{equation}\label{gr}
d(e_{i_1}e_{i_2}\dots e_{i_{n-1}}e_{i_n+1})=\sum_{k=1}^{n-1}e_{i_1}\dots d(e_{i_k})\dots e_{i_n+1}+e_{i_1}e_{i_2}\dots e_{i_n}.
\end{equation}
Since $e_{i_1}\dots d(e_{i_k})\dots e_{i_n+1}\prec e_{i_1}e_{i_2}\dots e_{i_n}$, by our hypothesis we obtain $e_{i_1}\dots d(e_{i_k})\dots e_{i_n+1}\in \im d$. Therefore, by equation (\ref{gr}) $e_{i_1}e_{i_2}\dots e_{i_n}\in \im d$, a contradiction. Thus $d$ is a surjective locally nilpotent derivation of the Grassmann algebra $E$.
}\end{example}

\begin{prop}\label{loc_der}
Let $d$ be a locally nilpotent derivation of an algebra $R$ over a field $K$ of characteristic zero. If $R^d$ is commutative, then $J(R)\cap R^d\cap d(R)$ is a nil ideal of $R^d$. In particular, if $d$ is surjective, then $J(R)\cap R^d\subseteq P(R^d)$.
\end{prop}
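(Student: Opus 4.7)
First I would reduce to a single-element statement: for any $a\in L:=J(R)\cap R^d\cap d(R)$, pick $b$ with $a=d(b)$; then $b\in R_1$ since $d(a)=0$, and for any $r\in R^d$ the identity $ra=d(rb)$ shows $L$ is closed under multiplication by $R^d$, so $L$ is an ideal of $R^d$. Everything then reduces to proving that each such $a$ is nilpotent.

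The mechanism is to transport $a$ into the derivative data of a $d$-finite unit. Because $ab\in J(R)$, the element $u:=(1-ab)^{-1}$ lies in $R^{1}$, and since $u-1=u\cdot ab\in R$, local nilpotency of $d$ on $R$ forces $d^{M}(u)=0$ for some $M$. Starting from $d(u)=u\cdot d(ab)\cdot u=u\,a^{2}\,u$ (using $d(a)=0$), a routine induction yields
\[
d^{n}(u)=n!\,(ua^{2})^{n}\,u.
\]
Setting $n=M$, dividing by $M!$ (permitted since $\mathrm{char}\,K=0$), and cancelling $u$ on the right yields $(ua^{2})^{M}=0$, equivalently $(a^{2}u)^{M-1}a^{2}=0$.

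The main obstacle is extracting $a^{2M}=0$ from this single relation, because $a$ and $u$ do not commute in general. The one piece of commutativity under control is the commutator $\alpha:=[a,b]$, which lies in $R^d$ (since $d(\alpha)=0$) and commutes with $a$ because $R^d$ is commutative; this gives $ba^{k}=a^{k}b-k\,a^{k-1}\alpha$ for every $k\ge 1$. Using it I would prove, by induction on $j$, the strengthened relation
\[
(a^{2}u)^{M-j}\,a^{2j}=0\qquad(j=1,\dots,M).
\]
The inductive step expands the trailing $u$ in $(a^{2}u)^{M-j-1}\cdot a^{2}u\cdot a^{2j}$ via $u=1+uab$, rewrites $aba^{2j}=a^{2j+1}b-2j\,a^{2j}\alpha$, and observes that both resulting summands factor on the left through $(a^{2}u)^{M-j}a^{2j}$, so vanish by the inductive hypothesis. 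Setting $j=M$ gives $a^{2M}=0$.

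The ``in particular'' clause is then immediate: when $d$ is surjective, $d(R)=R$, so $L=J(R)\cap R^d$ is a nil ideal of the commutative algebra $R^d$ and is therefore contained in the nilradical $P(R^d)$.
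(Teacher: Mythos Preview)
Your proof is correct and takes a genuinely different route from the paper's. The paper picks $x$ with $d(x)=a$, uses the quasi-inverse $b$ of $xa$ (so $xa+b=bxa$), and runs a case analysis: if $a^nb=0$ for some $n$ one gets $a^{n+2}=0$ directly, while if $a^nb\neq 0$ for all $n$ one looks at $k-1=\min_n\deg(a^nb)$ and derives a contradiction by differentiating the relation $a^nxa+a^nb=a^nbxa$ suitably. Your argument instead uses $b$ with $d(b)=a$, works with the unit $u=(1-ab)^{-1}$ in $R^1$, and exploits the closed formula $d^n(u)=n!\,(ua^2)^nu$; the commutator identity $ba^k=a^kb-k\,a^{k-1}\alpha$ (valid because $\alpha=[a,b]\in R^d$ commutes with $a$) then lets you peel off factors of $a^2u$ one at a time. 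Your approach avoids the case split and the degree bookkeeping, and it yields an explicit nilpotency bound $a^{2M}=0$ in terms of the $d$-degree of $u-1$; the paper's argument is shorter to state but relies on a minimality-plus-contradiction mechanism rather than a direct computation.
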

\begin{pf}
Take $a\in J(R))\cap R^d \cap d(R)$ and choose $x\in R$ such that $d(x)=a$. Since $xa\in J(R)$, we can find $b\in R$
such that
\begin{equation}\label{quasi_inv}
xa+b=(xa)b =b(xa).
\end{equation}

First suppose that there exists $n>0$ such that $a^nb=0$. Then by (\ref{quasi_inv}) it follows that $a^nxa=0$. Aplying $d$ we obtain $0=d(a^nxa)=a^nd(x)a= a^{n+2}$. Hence $a$ is nilpotent. Since $R^d$ is commutative, we obtain that $a\in P(R^d)$.

Now suppose that $a^nb\neq 0$ for any $n>0$. Take
 $k,n \geq 1$ be such that
$$
k-1=\min\{\deg(a^mb)\mid m\in \mathbb{N}\}=\deg(a^nb). 
$$
Then $d^{k-1}(a^nb)\neq0$ and $d^k(a^nb)=0$. By (\ref{quasi_inv}) it follows that
\begin{equation}\label{mult}
a^nxa+a^nb=a^nbxa
\end{equation}
Hence if $k\geq 2$, then $d^k(x)=0$ and by (\ref{mult})
$$
0=d^k(a^nxa)+d^k(a^nb)=d^k(a^nbxa)=kd^{k-1}(a^nb)d(x)a=kd^{k-1}(a^nb)a^2.
$$
Since $d^{k-1}(a^nb)\in R^d$ and $R^d$ is commutative, we obtain 
$$
0=d^{k-1}(a^nb)a^2=a^2d^{k-1}(a^nb)=d^{k-1}(a^{n+2}b),
$$
a contradiction with the definition of $k$. Consequently $k=1$, $a^nb\in R^d$ and by (\ref{mult})
$$
a^{n+2}= d(a^nxa)+d(a^nb)=d(a^nbxa)=a^nbd(x)a=(a^nb)a^2=a^{n+2}b.
$$
Thus $a^{n+2}=a^{n+2}b$ and  by (\ref{mult})
$$
a^{n+2}xa+a^{n+2}b=a^{n+2}bxa,
$$
so $a^{n+2}=0$, a contradiction with our assumption that $a^nb\neq 0$ for all $n$. This finishes the proof.
\qed \end{pf}

\medskip

A subspace $V$ of an algebra $R$ is said to be an {\it invariant subspace}
if $\sigma(V)\subseteq V$ for every $\sigma\in \mathbf{Aut}_K(R)$.

\begin{lem}\label{key} Every invariant subspace of an algebra $R$ over a field of characteristic zero is stable under locally nilpotent derivations of $R$. In particular, the Jacobson radical $J(R)$ is stable under locally nilpotent derivations.
\end{lem}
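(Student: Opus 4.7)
The plan is to exploit the exponential automorphism of a locally nilpotent derivation. Given a locally nilpotent derivation $d$ of $R$ and a scalar $t\in K$, I would form
\[
\exp(td)=\sum_{n\geq 0}\frac{t^{n}d^{n}}{n!}\colon R\longrightarrow R.
\]
For each fixed $r\in R$, local nilpotency ensures that this sum terminates, so $\exp(td)(r)$ is a finite sum in $R$; the denominators $n!$ make sense because $\mathrm{char}\,K=0$. The derivation identity together with the binomial expansion of $d^{n}(ab)$ gives the classical Leibniz-style identity $\exp(td)(ab)=\exp(td)(a)\exp(td)(b)$, while $\exp(-td)$ is a two-sided inverse. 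Hence $\exp(td)\in\mathbf{Aut}_{K}(R)$ for every $t\in K$.

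Next, if $V$ is an invariant subspace of $R$ then, by definition, $\exp(td)(V)\subseteq V$ for every $t\in K$. Fix $v\in V$ and let $m=\deg(v)$, so $d^{m+1}(v)=0$. Then for every $t\in K$,
\[
\exp(td)(v)=\sum_{k=0}^{m}\frac{t^{k}}{k!}\,d^{k}(v)\;\in\;V.
\]
Because $K$ has characteristic zero, it is infinite, so I can pick $m+1$ pairwise distinct scalars $t_{0},\dots,t_{m}\in K$. The Vandermonde matrix $(t_{i}^{k})_{0\leq i,k\leq m}$ is invertible over $K$, and expresses each $\tfrac{1}{k!}d^{k}(v)$ as a $K$-linear combination of the elements $\exp(t_{i}d)(v)\in V$. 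Consequently $d^{k}(v)\in V$ for every $k\geq 0$, and in particular $d(v)\in V$. This proves $d(V)\subseteq V$.

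For the final assertion, I would invoke the standard fact that the Jacobson radical $J(R)$ is stable under every $K$-algebra automorphism of $R$ (it admits an intrinsic description as the intersection of annihilators of simple modules, or equivalently as the set of quasi-invertible elements, both of which are automorphism-invariant formulations). Hence $J(R)$ is an invariant subspace of $R$, and the first part of the lemma applies.

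There is no real obstacle here beyond checking that $\exp(td)$ is a genuine automorphism; the characteristic-zero hypothesis is used twice, once to divide by $n!$ in defining $\exp(td)$, and once to guarantee the existence of enough distinct scalars $t_{i}$ for the Vandermonde step. Both uses are essential, which is consistent with the assumption $\mathrm{char}\,K=0$ in the statement.
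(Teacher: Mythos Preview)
Your proof is correct and shares its starting point with the paper's: both arguments rest on the fact that $\exp(td)$ (for $t\in K$, with $t=1$ in the paper) is a $K$-algebra automorphism, so that $\exp(td)(v)\in V$ for every $v\in V$. The difference lies in how one extracts each individual $d^{k}(v)$ from these finite sums. You vary $t$ over $m+1$ distinct scalars and invert the resulting Vandermonde system, which is quick and transparent. The paper instead works with the single automorphism $\exp(d)$ and proceeds recursively: starting from $x_{0}=\exp(d)(x)\in V$, it repeatedly applies $x\mapsto \exp(d)(x)-x$ to produce elements $x_{k}=\sum_{j\geq k}c^{(k)}_{j}d^{j}(x)\in V$ with strictly positive rational coefficients, eventually reaching $x_{m}=c^{(m)}_{m}d^{m}(x)\in V$, and then backs out the remaining $d^{s}(x)$ by downward induction. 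Your Vandermonde argument is cleaner and more standard; the paper's version has the modest advantage that it uses only the single automorphism $\exp(d)$ and coefficients in $\mathbb{Q}$, so it would still go through if one merely knew that $V$ is $\exp(d)$-stable rather than stable under all of $\mathbf{Aut}_{K}(R)$.
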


\begin{pf} Let $d$ be a locally nilpotent derivation of $R$.
It is well known that the exponential map
$\exp(d)=\sum\limits_{j=0}^{\infty}\frac{d^j}{j!}$
is an algebra automorphism of $R$. Thus if $V$ is an invariant subspace of $R$, then $\exp(d)(V)\subseteq V$. Let $x\in V$ and take an integer $m$  such that $d^m(x)\neq 0$ and $d^{m+1}(x)=0$. Then
$
\exp(d)(x)=\sum\limits_{j=0}^{m}\frac{d^j(x)}{j!}.
$
We claim that for $k=0,1,\dots,m$ there exist positive rational numbers $c^{(k)}_k, c^{(k)}_{k+1},\dots,c^{(k)}_m$ such that
\begin{equation}\label{1}
 x_k=\sum\limits_{j=k}^mc^{(k)}_jd^j(x)\in V.
\end{equation}
For $k=0$, since $\exp(d)(x)\in V$, it suffices to put
 $c^{(0)}_j=1/j!$ (for $j=0,1,\dots,m$). Let $0\leq k< m$ and $x_k=\sum\limits_{j=k}^mc^{(k)}_jd^j(x)\in V$ for some positive rational numbers
 $c^{(k)}_j.$ Then the element $\exp(d)(x_k)-x_k$ belongs to $V.$ Notice that

$$
\exp(d)(x_k)-x_k =\sum\limits_{i=1}^m\frac{d^i(x_k)}{i!}=
\sum\limits_{i=1}^m\sum\limits_{j=k}^m\frac{c^{(k)}_j}{i!}d^{i+j}(x)=
\sum\limits_{s=k+1}^m\left(\sum\limits_{i=1}^{s-k} \frac{c^{(k)}_{s-i}}{i!}\right)d^s(x).
$$
The elements $c^{(k+1)}_s=\sum\limits_{i=1}^{s-k}\frac{c^{(k)}_{s-i}}{i!}\in \mathbb{Q}$ (for $s=k+1,\dots,m$)  and $x_{k+1}=\sum\limits_{s=k+1}^mc^{(k+1)}_sd^s(x)$ satisfy desired property, which proves the claim. Since $x_m=c^{(m)}_md^m(x)\in V$ and $c^{(m)}_m>0$, one obtains $d^m(x)\in V$.
By induction it follows immediately from (\ref{1}) that $d^s(x)\in V$ for all  $s\geq 0$. Thus the subspace $V$ is $d$-stable.
\qed \end{pf}
\begin{thm}
Let $R$ be a $K$-algebra with a surjective locally nilpotent derivation $d$ such that $R$ is generated by $R_1=\ker d^2$. If $R^d$ is commutative, then the Jacobson radical $J(R)$ of $R$ is locally nilpotent.
\end{thm}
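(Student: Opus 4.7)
The plan is to realize $R$ as a homomorphic image of a skew extension of the commutative algebra $R^d$, apply Theorem \ref{prime_rad} to produce a locally nilpotent ideal $I' \supseteq P(R^d)$ in $R$, and then show $J(R)\subseteq I'$ by passing to the quotient $\bar R = R/I'$ and applying Proposition \ref{loc_der} there.

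First I would set $T := R_1$ and, for each $t\in T$, declare $\sigma_t := \operatorname{id}_{R^d}$ and $\delta_t := [t,\,\cdot\,]$. For $r\in R^d$ one has
\[
d([t,r]) = [d(t),r] + [t,d(r)] = [d(t),r] = 0,
\]
using $d(t)\in R^d$ (since $t\in R_1$) and commutativity of $R^d$; hence $[t,r]\in R^d$ and each $\delta_t$ is an ordinary derivation of $R^d$. Thus $\tilde R := R^d\langle T;\G,\D\rangle$ is a genuine skew extension, and since $\G$ consists entirely of identities it is trivially locally finite. By the Remark following Theorem \ref{prime_rad}, $P(R^d)$ is strongly invariant under all derivations of $R^d$ in characteristic zero, so Theorem \ref{prime_rad} yields a locally nilpotent ideal $P(R^d)\langle T;\G,\D\rangle^*$ of $\tilde R$. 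Because $R$ is generated by $R_1 = R^d\cup T$, the natural map $\phi\colon \tilde R\to R$ fixing $R^d$ pointwise and sending $t\mapsto t$ is surjective, and its image $I'$ is the ideal of $R$ generated by $P(R^d)$, which is locally nilpotent.

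Next, since $I'\subseteq J(R)$ one has $J(\bar R) = J(R)/I'$, so it suffices to prove $J(\bar R)=0$. The derivation $\bar d$ induced on $\bar R$ is locally nilpotent and surjective, and local nilpotency of $I'$ combined with commutativity of $R^d$ forces $R^d\cap I' = P(R^d)$; consequently the image of $R^d$ in $\bar R$ is the semiprime commutative ring $R^d/P(R^d)$. The crucial claim is that this image exhausts $\bar R^{\bar d}$. Granting this, Proposition \ref{loc_der} applied to $\bar R$ gives $J(\bar R)\cap \bar R^{\bar d}\subseteq P(\bar R^{\bar d}) = 0$; Lemma \ref{key} makes $J(\bar R)$ invariant under $\bar d$; and for any nonzero $\bar a\in J(\bar R)$ the minimal $k\geq 1$ with $\bar d^k(\bar a)=0$ would place the nonzero $\bar d^{k-1}(\bar a)$ inside $J(\bar R)\cap \bar R^{\bar d} = 0$, a contradiction. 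Hence $J(\bar R)=0$ and $J(R) = I'$ is locally nilpotent.

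The main obstacle is therefore the identification $\bar R^{\bar d} = R^d/P(R^d)$, which, by the snake lemma applied to $0\to I'\to R\to \bar R\to 0$ under the action of $d$, is equivalent to the surjectivity $d(I')=I'$. I would prove this by induction on the $d$-degree of the right-most factor: for a generating monomial $rps$ of $I'$ with $r,s\in R$ and $p\in P(R^d)$, pick $r'\in R$ with $d(r')=r$ (using surjectivity of $d$ on $R$) and use
\[
d(r'ps) = rps + r'p\,d(s),
\]
so $rps = d(r'ps) - r'p\,d(s)$, where $r'ps\in I'$ places $d(r'ps)$ in $d(I')$ and $r'p\,d(s)$ is a monomial of the same shape with strictly smaller $d$-degree in its right factor, hence lies in $d(I')$ by the inductive hypothesis. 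The base case $s\in R^d$ is immediate since then $d(s)=0$, and the monomial types $rp$ and $ps$ (and, when $R$ is unital, $p$ itself) are handled by direct applications of the same formula. Carrying out this surjectivity argument is the technical crux; once $d(I')=I'$ is established, the remaining pieces fit together automatically.
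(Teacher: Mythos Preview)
Your route is genuinely different from the paper's. The paper never passes to the quotient $R/I'$; instead it proves the weaker inclusion $J(R)^2\subseteq I':=P\langle T;\G,\D\rangle^*$ by a minimal-counterexample argument: if $a,b\in J(R)$ with $ab\notin I'$ and $\deg(a)+\deg(b)$ minimal, then (using Lemma~\ref{key}) $d(a)b,\,ad(b)\in I'$, one writes these as $\sum a_i\,d(x_i)$ and $\sum b_j\,d(y_j)$ with $a_i,b_j\in P$, and concludes that $ab-\sum a_ix_i-\sum b_jy_j\in R^d\cap J(R)\subseteq P$ by Proposition~\ref{loc_der}, a contradiction. Your approach, if it went through, would yield the sharper statement $J(R)=I'$.

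The trouble is exactly where you place it, and the parenthetical ``(and, when $R$ is unital, $p$ itself)'' is not a detail but the whole difficulty. Since $T=R_1$ generates $R$, one checks that $I'=P+PR$ with $P=P(R^d)$; because $d$ kills $P$ and $d(R)=R$, one has $d(I')=P\cdot d(R)=PR$. Hence $d(I')=I'$ is \emph{equivalent} to $P\subseteq PR$, which you have not established and which has no evident reason to hold when $R$ lacks a unit (the theorem imposes no unitality hypothesis). Your inductive scheme on $\deg(s)$ handles $rps$, $rp$, $ps$, but for a bare $p\in P$ you would need some $y\in I'$ with $d(y)=p$; surjectivity of $d$ on $R$ only provides $y\in R$.

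If $P\not\subseteq PR$ then, by your own snake-lemma computation, the map $R^d/P\to\bar R^{\bar d}$ has nonzero cokernel $P/(P\cap PR)$, so the identification $\bar R^{\bar d}=R^d/P$ fails. At that point you no longer know that $\bar R^{\bar d}$ is commutative (the hypothesis needed to invoke Proposition~\ref{loc_der} for $\bar R$), nor that $P(\bar R^{\bar d})=0$; the subsequent descent via Lemma~\ref{key} therefore does not get off the ground. In short, the architecture is reasonable and the reduction to $d(I')=I'$ is correctly identified, but that surjectivity is the crux and your argument leaves it open precisely in the non-unital case the theorem must cover. The paper's degree-minimality argument avoids having to control $\bar R^{\bar d}$ at all.
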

\begin{pf}
Observe that for any $t\in R_1$ and $r\in R^d$ the element $\delta_t(r)=tr-rt$ is a constant of $d$. Thus 
$\D=\{\delta_t\mid t\in R_1\}$ is a family of derivations of $R^d$. By assumption $R$ is generated by $T=R_1$, so
$R=R^d\langle T;\G,\D\rangle$ is a skew extension of $R^d$, with trivial family  of automorphisms $\G=\{{\rm id}_R\}$. Let $P$ be the prime radical of $R^d$. By Theorem \ref{prime_rad} the ideal $P\langle T;\G,\D\rangle^*$ is locally nilpotent, so
$
P\langle T;\G,\D\rangle^*\subseteq J(R).
$
We will prove that
\begin{equation}\label{inclusion}
J(R)^2\subseteq P\langle T;\G,\D\rangle^*.
\end{equation}
To this end suppose that $J(R)^2\not\subseteq P\langle T;\G,\D\rangle^*$ and take $a,b\in J(R)$ such that $ab\not\in P\langle T;\G,\D\rangle^*$ and $\deg(a)+\deg(b)$ is the smallest possible. By Lemma \ref{key} $d(a)$ and $d(b)$ belong to $J(R)$ and clearly $\deg(d(a))<\deg(a)$, $\deg(d(b))<\deg(b)$. Therefore $d(a)b\in P\langle T;\G,\D\rangle^*$ and
$ad(b)\in P\langle T;\G,\D\rangle^*$. Since $d$ is surjective there exist $a_1,\dots, a_n, b_1,\dots,b_m\in P$
and $x_1,\dots,x_n,y_1,\dots,y_m\in R$ such that
$$
d(a)b=\sum_{i=1}^n a_id(x_i)  \  \  {\rm and }\  \   ad(b)=\sum_{j=1}^m b_jd(y_j).
$$
Thus
$$
d(ab)=d(a)b+ad(b)=\sum_{i=1}^n a_id(x_i)+\sum_{j=1}^m b_jd(y_j) =d(\sum_{i=1}^n a_ix_i+\sum_{j=1}^m b_jy_j)
$$
and hence $ab-\sum\limits_{i=1}^n a_ix_i-\sum\limits_{j=1}^m b_jy_j\in R^d\cap J(R)$. But Proposition \ref{loc_der}
implies that $J(R)\cap R^d\subseteq P$, so $ab\in P\langle T;\G,\D\rangle^*$. The obtained contradiction finishes the proof of (\ref{inclusion}). It also  implies immediately that the Jacobson radical $J(R)$ is locally nilpotent.
\qed \end{pf}

\section*{Acknowledgements} The author would like to thank the anonymous referee for his helpful comments. 


\end{document}